\newtheorem{thm}{Theorem}[section]
\newtheorem{lem}[thm]{Lemma}
\newtheorem{defn}[thm]{Definition}
\newtheorem{ex}[thm]{Example}
\def\bc{\begin{center}}       \def\ec{\end{center}}
\def\be{\begin{equation}}     \def\ee{\end{equation}}
\def\ba{\begin{array}}        \def\ea{\end{array}}
\def\bea{\begin{eqnarray}}    \def\eea{\end{eqnarray}}
\def\beaa{\begin{eqnarray*}}  \def\eeaa{\end{eqnarray*}}
\def\ifl{\iffalse}
\begin{document}
\title[Series solutions to the cauchy problem for partial differential equations]{Series solutions to the cauchy problem for partial differential equations}


\author[T. Zhang, Alatancang]{Tao Zhang$^{a}$, \quad  Alatancang$^{b,}$\corrauth}

\address{ $^a$School of Mathematical Sciences of Inner Mongolia University, Hohhot, 010021, China\\
           $^b$Huhhot University for Nationalities, Hohhot, 010051, China}

\email{{\tt zhangtaocx@163.com (T. Zhang), alatanca@imu.edu.cn (A. Chen).}}



\begin{abstract}
The method of separation of variables can be used to solve many separable linear partial differential equations (LPDEs). Moreover, variable separation solutions usually are some trigonometric series.
In the paper, base on some ideas of this method, we introduce a new technique to solve the Cauchy problem for some LPDEs with the initial conditions consisting of some trigonometric series, power series and exponential series. Then many LPDEs which are not separable are solved, such as some second order elliptic equations, Stokes equations and so on. In addition, the solutions of them can be expressed by trigonometric series, power series or exponential series. Moreover, by using power amd exponential series and an iterative method, we can solve many LPDEs and nonlinear PDEs for the first time.
\end{abstract}


\ams{35A09, 35C10, 76D07}

\keywords{partial differential equation;   series solution; Cauchy problem; separation of variables; Stokes equations}

\maketitle


\section{Introduction}\label{intr}

  The method of separation of variables (also known as the Fourier method) is one of the oldest and most widely used techniques for solving linear partial differential equations (LPDEs) \cite{ht}. 
  Moreover, variable separation solutions are trigonometric series in general. However, this method can not be used to deal with the LPDEs which are not separable. In addition, variable separation solutions usually could not be expressed by power series and exponential series.

 For simplicity, we write
        \begin{equation*}
            \begin{array}{l}
               T_{1}=\sum_{j=1}^{n}a_{j}\frac{\text{d}^{2j}}{\text{d}x^{2j}};\\
               T_{2}=\sum_{j=1}^{n}a_{j}\frac{\text{d}^{j}}{\text{d}x^{j}};\\
               T_{3}=\sum_{j=1}^{n}a_{j}x^j\frac{\text{d}^{j}}{\text{d}x^{j}}.
             \end{array}
        \end{equation*}

 Some keys in solving many LPDEs (such as the wave equation) by using the method of separation of variables are:

 (A) The functions in the boundary conditions of these LPDEs can be expressed by trigonometric series (Fourier series).

 (B) Stretching transformation (eigenvalues and eigenvectors):
 \begin{equation*}
   \begin{array}{l}
      T_1 \sin kx=\left(\sum_{j=1}^{n}(-1)^ja_{j}k^{2j}\right) \sin kx,\ \ k=1,2,\cdots;\\
      T_1 \cos kx=\left(\sum_{j=1}^{n}(-1)^ja_{j}k^{2j}\right) \cos kx,\ \ k=0,1,2,\cdots;\\
      T_2 e^{ ikx}=\left(\sum_{j=1}^{n}a_{j}(ik)^{j}\right)e^{ ikx},\ \ k=0,\pm1,\pm2,\cdots.
   \end{array}
 \end{equation*}

  Our work's motivation partly comes from the following ideas:

  (A') There are some functions which can be expressed by power series or exponential series (Taylor series). For examples:
  \begin{equation*}
  \begin{array}{l}
     \frac{1}{1-x}=\sum_{k=0}^{+\infty}x^k,\ \ x\in (-1,1);\\
     \frac{1}{1-e^x}=\sum_{k=0}^{+\infty}e^{kx},\ \ x\in (-\infty,0).
  \end{array}
  \end{equation*}

 (B') Stretching transformation (eigenvalues and eigenvectors):
 \begin{equation*}
   \begin{array}{l}
      T_2 e^{ kx}=\left(\sum_{j=1}^{n}a_{j}k^{j}\right)e^{ kx},\ \  k=0,1,2,\cdots;\\
      T_3 x^k=\left(\sum_{j=1}^{n}a_{j}\prod_{m=0}^{j-1}(k-m)\right)x^k,\ \  k=0,1,2,\cdots.
   \end{array}
 \end{equation*}

      Then base on (A), (B), (A'), (B'), the undetermined coefficient method and the superposition principle for the solution of LPDEs, we introduce a new technique, by using which we can solve the cauchy problem for many LPDEs even if they are not separable, such as some second order elliptic equation, Stokes equations and so on. Moreover, the solutions of them can be trigonometric series, power series or exponential series.

      Let $\Lambda_1=\{e^{\lambda kx}\}_{k=0}^{+\infty},\ \Lambda_2=\{x^{\mu k}\}_{k=0}^{+\infty}$ where $\lambda ,\mu\in \mathbb{R}\setminus \{0\}$.
    Then for any $m_1,m_2=0,1,2,\cdots$, we have
    \begin{equation}\label{1.bf}
    \left\{
      \begin{array}{l}
         e^{\lambda m_1x}e^{\lambda m_2x}=e^{\lambda (m_1+m_2)x}\in \Lambda_1,\\
         x^{\mu m_1}x^{\mu m_2}=x^{\mu (m_1+m_2)}\in \Lambda_2,\\
         m_1+m_2\geq\max\{m_1,m_2\}.
       \end{array}
       \right.
    \end{equation}
  Then by using an iterative method with respect to \eqref{1.bf}, we can solve many LPDEs and nonlinear PDEs (NPDEs) for the first time.


 \section{Series solutions to the cauchy problem for some LPDEs}

Notation

     \begin{equation*}
      \begin{array}{ll}
       \mathbb{R}-\text{the  real  numbers}.\\
       \mathbb{C}-\text{the  complex numbers}.\\
       e^f=\exp(f).\\
       \mathbb{R}^{n}=\{(r_1,\cdots,r_n)\mid r_j\in \mathbb{R},\ 1\leq j\leq n\}.\\
       \mathbb{Z}^{n}=\{(k_{1},\cdots,k_{n})\mid k_{j}=0,\pm 1,\pm 2,\cdots,\ 1\leq j\leq n\}.\\
       \mathbb{N}^{n}=\{(k_{1},\cdots,k_{n})\mid k_{j}=0,1,2,\cdots,\ 1\leq j\leq n\}.\\
       \mathbb{N}_+^{n}=\{(k_{1},\cdots,k_{n})\mid k_{j}=1,2,\cdots,\ 1\leq j\leq n\}.\\
        \sum\limits_{k=(k_{1},\cdots,k_{n})\in \mathbb{Z}^{n}}a_{k}=\sum\limits_{m=0}^{+\infty}\ \sum\limits_{\mid k\mid= m}a_{k}
        ,\ \ \ \ \ \ \ \  |k|=\sum\limits_{j=1}^n|k_{j}|.\\
      \end{array}
      \end{equation*}

     Let $\Gamma=(\Gamma_{pq})_{n\times n}$ be an $n\times n$ matrix differential operator, and let
     \begin{equation*}
       \Gamma_{pq}=\sum\limits_{h=0}^{m_{pq}}A_{pqh}(t)\partial_{t}^{h}\sum\limits_{j=1}^{w_{pqh}}B_{pqhj}(x)\partial _{x}^{\alpha_{pqhj}},
     \end{equation*}
     where $1\leq p,q\leq n$, $m_{pq}\in \mathbb{N}$, $w_{pqh}\in \mathbb{N}_+$,
     $\alpha_{pqhj}\in \mathbb{N}^n$, $x\in \Omega\subseteq \mathbb{R}^{n}$, $A_{pqh}(t)\in C[0,T],$ $ B_{pqhj}(x)\in C(\Omega)$.
     In this section, we consider the Cauchy problem for the following LPDEs:
       \begin{numcases}{}
      \Gamma u(x,t)=f(x,t),\ \ x\in \Omega,\ 0\leq t\leq T,\label{1b}\\
         \partial_{t}^{h}u_q|_{t=0}=\sum\limits_{k\in \Lambda}r_{qhk}\xi_{k}\in C(\Omega),\ \
         1\leq q\leq n,\ 0\leq h\leq m_q-1, \label{3b}\\
       f_j=\sum\limits_{k\in \Lambda}\xi_{k}Z_{kj}(t)\in C\left(\Omega\oplus[0,T]\right),\ \ 1\leq j\leq n,\label{4b}
    \end{numcases}
    where $u=(u_1,\cdots,u_n)^T,$ $ f=(f_1,\cdots,f_n)^T$, $\xi_{k}\in C(\Omega)$, $k\in\bigwedge\subseteq \mathbb{N}^n$,
    and $m_q=\max\limits_{1\leq p\leq n}m_{pq},\ 1\leq q\leq n$.

     \begin{defn} We say Eq. \eqref{1b}-\eqref{4b} fulfils the Fourier-Taylor  conditions,
     which we shall denote by $ u(x,t)\in FT(\Omega\oplus[0,T]),\ \{\xi_k\}_{k\in \Lambda},$
     if for any $1\leq p,q\leq n,\ 0\leq h\leq m_{pq},\ 1\leq j\leq w_{pqh}$, there exists a sequence $\{l_{pqhjk}\}_{k\in \Lambda}\subseteq \mathbb{C}$ such that
      \begin{equation*}
          B_{pqhj}(x)D^{\alpha_{pqhj}}\xi_{k}=l_{pqhjk}\xi_{k},\ \ k\in \Lambda.
      \end{equation*}
     \end{defn}

         Next we solve Eq. \eqref{1b}-\eqref{4b} when $u(x,t)\in FT(U_{n,t}),\ \{\xi_k\}_{k\in \Lambda}$ holds.
      We let
    \begin{equation}\label{b1}
        u(x,t)=\sum\limits_{k\in \Lambda}\xi_{k}T_{k}(t),
    \end{equation}
    where $T_{k}(t)=(T_{k1}(t),\cdots,T_{kn}(t))^T$.
     Suppose that the following conditions hold:
     \begin{equation}\label{4oooo}
       \left\{
       \begin{array}{l}
       u=\sum\limits_{k\in \Lambda}\xi_{k}T_{k}(t)\in C(\Omega\oplus[0,T]),\\
       \\
          \partial_{t}^{h}\partial _{x}^{\alpha_{pqhj}}u
        =\sum\limits_{k\in \Lambda}T_{k}^{(h)}(t)\partial _{x}^{\alpha_{pqhj}}\xi_{k}\in C(\Omega\oplus[0,T]),\ \ 1\leq p,q\leq n,\ 0\leq h\leq m_{pq},\ 1\leq j\leq w_{pqh}.
       \end{array}
       \right.
     \end{equation}
    Then by substituting the series \eqref{4oooo} into Eq. \eqref{1b}-\eqref{4b} we have
   \begin{equation*}
   \left\{
     \begin{array}{l}
    \sum\limits_{k\in \Lambda}\xi_{k}\left(
   \sum\limits_{1\leq q\leq n,\ 0\leq h\leq m_{pq},\ 1\leq j\leq w_{pqh}}l_{pqhjk}A_{pqh}(t)T_{kq}^{(h)}(t)-Z_{kp}(t)\right)=0,\ \
    1\leq p\leq n,\\
    \partial_{t}^{h}u_q|_{t=0}=\sum\limits_{k\in \Lambda}T^{(h)}_{kq}(0)\xi_{k}=\sum\limits_{k\in \Lambda}r_{qhk}\xi_{k},\ \
         1\leq q\leq n,\ 0\leq h\leq m_q-1,
     \end{array}
   \right.
   \end{equation*}
  For every $k\in \Lambda$, let
 \begin{equation}\label{fsa}
   \left\{
     \begin{array}{c}
   \sum\limits_{1\leq q\leq n,\ 0\leq h\leq m_{pq},\ 1\leq j\leq w_{pqh}}l_{pqhjk}A_{pqh}(t)T_{kq}^{(h)}(t)-Z_{kp}(t)=0,\ \
    1\leq p\leq n,\\
   T^{(h)}_{kq}(0)=r_{qhk},\ \ \ \ \ \ 1\leq q\leq n,\ 0\leq h\leq m_q-1.
     \end{array}
   \right.
   \end{equation}
    This is a Cauchy problem for an ODEs, so we may get $T_{kq}(t),\ 1\leq q\leq n,\ k\in \Lambda$.

 \begin{defn}We call the series \eqref{b1} a formal solution of Eq. \eqref{1b}-\eqref{4b} w.r.t. (with respect to) $\{\xi_{k}\}_{k\in \Lambda}$.
     \end{defn}

     \begin{thm} If $u(x,t)\in FT(\Omega\oplus[0,T]),\ \{\xi_k\}_{k\in \Lambda}$,
     and if the solution of Eq. \eqref{fsa}
    exists and is unique for every $k\in \Lambda$ , then the formal solution of Eq. \eqref{1b}-\eqref{4b} w.r.t.
     $\{\xi_{k}\}_{k\in \Lambda}$ exists and is unique.
     \end{thm}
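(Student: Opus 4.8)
The plan is to trace through the reduction already carried out in the text and verify that each step is reversible, so that "formal solution of \eqref{1b}-\eqref{4b} w.r.t. $\{\xi_k\}_{k\in\Lambda}$" and "family of solutions $\{T_{kq}(t)\}$ of \eqref{fsa}, $k\in\Lambda$" are in bijective correspondence. First I would fix the ansatz \eqref{b1}, $u(x,t)=\sum_{k\in\Lambda}\xi_k T_k(t)$, as the definition of a formal solution; by Definition~2 a formal solution is exactly a choice of vector functions $T_k=(T_{k1},\dots,T_{kn})^T$ for which \eqref{4oooo} holds and the substitution into \eqref{1b}-\eqref{4b} is valid term by term. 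The hypothesis $u(x,t)\in FT(\Omega\oplus[0,T]),\ \{\xi_k\}_{k\in\Lambda}$ is what makes the substitution legitimate: it guarantees that for each $p,q,h,j$ we may replace $B_{pqhj}(x)D^{\alpha_{pqhj}}\xi_k$ by $l_{pqhjk}\xi_k$, so that applying $\Gamma$ to the series \eqref{b1} produces $\sum_{k\in\Lambda}\xi_k\big(\sum_{q,h,j} l_{pqhjk}A_{pqh}(t)T_{kq}^{(h)}(t)\big)$ in the $p$-th component, and likewise the data \eqref{3b} and forcing \eqref{4b} are already expanded along the same family $\{\xi_k\}$.

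Next I would invoke linear independence of the family $\{\xi_k\}_{k\in\Lambda}$ (implicit in the notion of series expansion "w.r.t. $\{\xi_k\}$": a series $\sum_k\xi_k g_k(t)$ vanishing in $C(\Omega\oplus[0,T])$ forces every $g_k\equiv 0$). Matching the coefficient of $\xi_k$ in the PDE gives, for each $k$ and each $p$,
\[
\sum\limits_{1\leq q\leq n,\ 0\leq h\leq m_{pq},\ 1\leq j\leq w_{pqh}}l_{pqhjk}A_{pqh}(t)T_{kq}^{(h)}(t)-Z_{kp}(t)=0,
\]
and matching the coefficient of $\xi_k$ in the initial conditions \eqref{3b} gives $T_{kq}^{(h)}(0)=r_{qhk}$ for $1\leq q\leq n$, $0\leq h\leq m_q-1$. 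These are precisely the equations \eqref{fsa}. Conversely, given for every $k\in\Lambda$ a solution $(T_{k1},\dots,T_{kn})$ of \eqref{fsa}, the series $u=\sum_{k\in\Lambda}\xi_k T_k(t)$ — once \eqref{4oooo} is assumed so that $\Gamma$ may be applied termwise — satisfies \eqref{1b}-\eqref{4b}, again by linearity and the $FT$ hypothesis. Thus formal solutions are in one-to-one correspondence with families $\{(T_{kq})_q\}_{k\in\Lambda}$ of solutions of \eqref{fsa}.

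Finally, existence and uniqueness transfer through this correspondence. If each Cauchy problem \eqref{fsa} has a unique solution, then the family $\{(T_{kq})_q\}_{k\in\Lambda}$ exists and is uniquely determined, hence the formal solution \eqref{b1} exists; and any two formal solutions yield, by the coefficient-matching above, families solving \eqref{fsa}, which must coincide termwise, so the two formal solutions are equal. I do not expect a genuine obstacle here — the statement is essentially a bookkeeping lemma — but the one point that needs care is the termwise manipulation of the infinite series: one must make explicit that a "formal solution" is, by Definition~2 together with \eqref{4oooo}, required to admit exactly the term-by-term differentiations and applications of $\Gamma$ used above, and that the family $\{\xi_k\}$ is linearly independent in the relevant function space; granting these conventions, the proof is a direct unwinding of the construction preceding the theorem.
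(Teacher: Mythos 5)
Your proposal is correct and follows the same route the paper takes implicitly: the theorem is proved by the construction preceding it, in which substituting the ansatz \eqref{b1} and matching coefficients of $\xi_k$ reduces the problem to the family of ODE Cauchy problems \eqref{fsa}, so existence and uniqueness transfer directly. Your one addition --- making explicit that the family $\{\xi_k\}_{k\in\Lambda}$ must be linearly independent (indeed, that a vanishing series $\sum_k \xi_k g_k(t)$ forces every $g_k\equiv 0$) for the coefficient matching and hence uniqueness to be legitimate --- is a tacit assumption the paper does not state, and flagging it is appropriate.
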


     \begin{thm}\label{thm3.3} Suppose that the series \eqref{b1} is a formal solution of Eq. \eqref{1b}-\eqref{4b} w.r.t.
    $\{\xi_{k}\}_{k\in \Lambda}$. If it satisfies the conditions \eqref{4oooo}, then it is a solution of Eq. \eqref{1b}-\eqref{4b}.
    \end{thm}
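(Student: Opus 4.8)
The plan is to check by direct substitution that the formal solution \eqref{b1}, under hypothesis \eqref{4oooo}, satisfies the equation \eqref{1b} and the initial conditions \eqref{3b}; the only role of \eqref{4oooo} is to license termwise differentiation and the interchange of $\Gamma$ with the infinite sum $\sum_{k\in\Lambda}$, so once that is granted the proof is exactly the chain of equalities written out formally just before the statement, now with each step justified.

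First I would verify the PDE. Fix $1\le p\le n$ and compute $(\Gamma u)_p=\sum_{q=1}^{n}\Gamma_{pq}u_q$, where $\Gamma_{pq}=\sum_{h=0}^{m_{pq}}A_{pqh}(t)\partial_t^{h}\sum_{j=1}^{w_{pqh}}B_{pqhj}(x)\partial_x^{\alpha_{pqhj}}$. Using the second line of \eqref{4oooo} I may pass $\partial_t^{h}\partial_x^{\alpha_{pqhj}}$ inside $\sum_{k\in\Lambda}\xi_k T_{kq}(t)$, obtaining $\sum_{k\in\Lambda}T_{kq}^{(h)}(t)\partial_x^{\alpha_{pqhj}}\xi_k$; the Fourier--Taylor identity $B_{pqhj}(x)D^{\alpha_{pqhj}}\xi_k=l_{pqhjk}\xi_k$ (valid since $u(x,t)\in FT(\Omega\oplus[0,T]),\ \{\xi_k\}_{k\in\Lambda}$) then turns each summand into $l_{pqhjk}\xi_k T_{kq}^{(h)}(t)$, so that
\[
\Gamma_{pq}u_q=\sum_{k\in\Lambda}\xi_k\!\!\sum_{0\le h\le m_{pq},\ 1\le j\le w_{pqh}}\!\! l_{pqhjk}A_{pqh}(t)T_{kq}^{(h)}(t).
\]
Summing over $q$ and invoking the first line of the ODE system \eqref{fsa}, namely $\sum_{1\le q\le n,\,0\le h\le m_{pq},\,1\le j\le w_{pqh}}l_{pqhjk}A_{pqh}(t)T_{kq}^{(h)}(t)=Z_{kp}(t)$, gives $(\Gamma u)_p=\sum_{k\in\Lambda}\xi_k Z_{kp}(t)$, which by \eqref{4b} is precisely $f_p$. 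Hence $\Gamma u=f$ on $\Omega\oplus[0,T]$.

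Next I would verify the initial data. For $1\le q\le n$ and $0\le h\le m_q-1$, \eqref{4oooo} ensures that the termwise $t$-differentiated series $\sum_{k\in\Lambda}T_{kq}^{(h)}(t)\xi_k$ represents $\partial_t^{h}u_q$ and is continuous up to $t=0$, so $\partial_t^{h}u_q|_{t=0}=\sum_{k\in\Lambda}T_{kq}^{(h)}(0)\xi_k$; the second line of \eqref{fsa} gives $T_{kq}^{(h)}(0)=r_{qhk}$, whence $\partial_t^{h}u_q|_{t=0}=\sum_{k\in\Lambda}r_{qhk}\xi_k$, which is \eqref{3b}. Together with the first line of \eqref{4oooo}, which gives $u\in C(\Omega\oplus[0,T])$, this shows that $u$ is a solution of \eqref{1b}--\eqref{4b}.

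The step needing the most care — and the one I would write out in full — is the interchange of $\Gamma_{pq}$ with $\sum_{k\in\Lambda}$: because $\Gamma_{pq}$ applies $A_{pqh}(t)\partial_t^{h}$ \emph{after} the spatial operator $\sum_j B_{pqhj}(x)\partial_x^{\alpha_{pqhj}}$, one needs not merely that $u$ and its pure spatial derivatives, but also the mixed derivatives $\partial_t^{h}\partial_x^{\alpha_{pqhj}}u$, be recovered by termwise differentiation and be continuous on $\Omega\oplus[0,T]$. This is exactly what the second line of \eqref{4oooo} provides, so under that hypothesis there is no real analytic obstacle; the content of the proof is the bookkeeping above.
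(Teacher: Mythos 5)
Your argument is correct and is essentially the same as the one the paper relies on: the computation carried out just before the theorem (substituting \eqref{b1} into \eqref{1b}--\eqref{4b} under \eqref{4oooo}, using the Fourier--Taylor identity $B_{pqhj}D^{\alpha_{pqhj}}\xi_k=l_{pqhjk}\xi_k$ and the ODE system \eqref{fsa}) is exactly what you have written out, with the termwise-differentiation step justified by \eqref{4oooo}. Your explicit remark that the mixed derivatives $\partial_t^h\partial_x^{\alpha_{pqhj}}u$ are what is really needed, because $A_{pqh}(t)\partial_t^h$ acts after the spatial operator, is a useful clarification but not a departure from the paper's route.
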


    Clearly if $\Lambda$ is a finite set, then the conditions \eqref{4oooo} hold. So we have:

    \begin{thm}\label{thm3.4} If $\Lambda$ is a finite set, then a formal solution of Eq. \eqref{1b}-\eqref{4b} w.r.t.
    $\{\xi_{k}\}_{k\in \Lambda}$ is a solution.
    \end{thm}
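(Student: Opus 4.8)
The plan is to reduce the statement to Theorem~\ref{thm3.3}, which already guarantees that a formal solution is a genuine solution as soon as the regularity conditions \eqref{4oooo} hold. Hence the only work to do is to check that, when $\Lambda$ is a finite set, those conditions are \emph{automatically} satisfied by any formal solution \eqref{b1}; once this is done the conclusion follows verbatim from Theorem~\ref{thm3.3}.

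First I would record that for a finite index set $\Lambda$ the ``series'' \eqref{b1} is literally a finite sum $u(x,t)=\sum_{k\in\Lambda}\xi_{k}T_{k}(t)$ with finitely many summands. By the definition of a formal solution, each component $T_{kq}$ solves the Cauchy problem \eqref{fsa}, an ODE system whose coefficients $A_{pqh}(t)$ lie in $C[0,T]$; in particular every $T_{kq}(t)$ is continuous and possesses the derivatives $T_{kq}^{(h)}$ of the orders $h\le m_{pq}$ that actually appear. Moreover $\xi_{k}\in C(\Omega)$ and, by the Fourier--Taylor hypothesis $u(x,t)\in FT(\Omega\oplus[0,T])$, one has $B_{pqhj}(x)D^{\alpha_{pqhj}}\xi_{k}=l_{pqhjk}\xi_{k}$, so the functions $\partial_{x}^{\alpha_{pqhj}}\xi_{k}$ that enter \eqref{4oooo} are themselves continuous on $\Omega$. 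Consequently each product $\xi_{k}T_{k}(t)$, and each $T_{k}^{(h)}(t)\,\partial_{x}^{\alpha_{pqhj}}\xi_{k}$, is continuous on $\Omega\oplus[0,T]$.

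Next I would verify the two lines of \eqref{4oooo}. The first, $u\in C(\Omega\oplus[0,T])$, is immediate because a finite sum of continuous functions is continuous. For the second I would invoke the elementary fact that differentiation — whether in $t$ or in $x$ — commutes with a finite sum, so that there is no interchange-of-limit or uniform-convergence issue whatsoever:
\[
\partial_{t}^{h}\partial_{x}^{\alpha_{pqhj}}u=\sum_{k\in\Lambda}T_{k}^{(h)}(t)\,\partial_{x}^{\alpha_{pqhj}}\xi_{k},
\qquad 1\le p,q\le n,\ 0\le h\le m_{pq},\ 1\le j\le w_{pqh},
\]
and the right-hand side is once again a finite sum of continuous functions, hence continuous. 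Thus \eqref{4oooo} holds for the formal solution \eqref{b1}, and Theorem~\ref{thm3.3} applies to conclude that \eqref{b1} solves Eq.~\eqref{1b}--\eqref{4b}.

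The argument is essentially bookkeeping, and the point of the hypothesis ``$\Lambda$ finite'' is precisely that it removes every convergence question that makes the infinite-series case delicate. If one insists on naming the single place requiring care, it is only making sure that the ingredients of the finite sum are individually regular enough — that the $T_{k}$ produced by \eqref{fsa} carry the derivatives $T_{k}^{(h)}$ named in \eqref{4oooo}, and that $\partial_{x}^{\alpha_{pqhj}}\xi_{k}$ is continuous — but both facts are built into the standing assumptions ($A_{pqh}\in C[0,T]$, $B_{pqhj}\in C(\Omega)$, the Fourier--Taylor relation, and the very notion of a formal solution), so no genuine obstacle remains.
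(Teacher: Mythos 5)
Your proposal is correct and follows exactly the route the paper takes: the paper's entire justification is the remark that for finite $\Lambda$ the conditions \eqref{4oooo} hold automatically, after which Theorem~\ref{thm3.3} applies. You have simply spelled out the bookkeeping (finite sums of continuous functions are continuous, and differentiation commutes with finite sums) that the paper leaves implicit.
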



 Next we solve a well known partial differential equation by using the above technique. The result we obtain is exactly the same as the variable separation solutions. However, our technique is more simple and intuitive.

   \begin{ex} (Wave Equation \cite{WE}).
     \begin{numcases}{}
           u_{tt}-a^2u_{xx}=0,\ \
            0\leq x\leq l,\ t\geq 0,\ a\in \mathbb{R}\setminus \{0\},\label{3.1a1} \\
           u(x,0)=\sum\limits_{k\in \mathbb{N}_+}A_k\sin\frac{k\pi x}{l},\ \ \ u_t(x,0)=\sum\limits_{k\in \mathbb{N}_+}B_k\sin\frac{k\pi x}{l},\label{3.2a1}\\
          u(0,t)=u(l,t)=0.\label{3.aaa}
    \end{numcases}
    \end{ex}

   If we delete the condition \eqref{3.aaa}, then $u(x,t)\in FT([0,l]\oplus [0,+\infty))$, $\left\{\sin\frac{k\pi x}{l}\right\}_{k\in \mathbb{N}_+}$. Next we set
    \begin{equation}\label{3.a3}
        u(x,t)=\sum\limits_{k\in \mathbb{N}_+}T_{k}(t)\sin\frac{k\pi x}{l}.
    \end{equation}
    It satisfies \eqref{3.aaa}.
     Suppose that the series \eqref{3.a3} satisfies  the following conditions:
     \begin{equation}\label{3.aa}
     \left\{
    \begin{array}{l}
    u=\sum\limits_{k\in \mathbb{N}_+}T_{k}(t)\sin\frac{k\pi x}{l}\in C([0,l]\oplus [0,+\infty)),\\
    u_{xx}=\sum\limits_{k\in \mathbb{N}_+}T_{k}(t)\left(\sin\frac{k\pi x}{l}\right)''\in C([0,l]\oplus [0,+\infty)),\\
    u_{tt}=\sum\limits_{k\in \mathbb{N}_+}T''_{k}(t)\sin\frac{k\pi x}{l}\in C([0,l]\oplus [0,+\infty)).
   \end{array}
   \right.
   \end{equation}
     Then by substituting the series \eqref{3.aa} into Eq. \eqref{3.1a1}-\eqref{3.aaa} we have
   \begin{equation*}
    \left\{
      \begin{array}{ll}
       \sum\limits_{k\in \mathbb{N}_+}\left(T_{k}''+\left(\frac{ak\pi }{l}\right)^2T_{k}\right)\sin\frac{k\pi x}{l}=0,\\
        u(x,0)=\sum\limits_{k\in \mathbb{N}_+}T_{k}(0)\sin\frac{k\pi x}{l}=\sum\limits_{k\in \mathbb{N}_+}A_k\sin\frac{k\pi x}{l},\\
   u_t(x,0)=\sum\limits_{k\in \mathbb{N}_+}T'_{k}(0)\sin\frac{k\pi x}{l}=\sum\limits_{k\in \mathbb{N}_+}B_k\sin\frac{k\pi x}{l}.
      \end{array}
    \right.
   \end{equation*}
    Next for any $k\in \mathbb{N}_+$, we let
 \begin{equation*}
    \left\{
      \begin{array}{ll}
      T_{k}''+\left(\frac{ak\pi }{l}\right)^2T_{k}=0,\\
      T_{k}(0)= A_k,\\
      T'_{k}(0)= B_k.
      \end{array}
    \right.
   \end{equation*}
    Then we have
    \begin{equation*}\label{3.a7}
    \begin{array}{cc}
        T_{k}(t)=A_{k}\cos\frac{ak\pi }{l}t+\frac{l}{ak\pi }B_{k}\sin\frac{ak\pi}{l}t,\ \ \ \ k\in \mathbb{N}_+.
        \end{array}
    \end{equation*}
  So the formal solution of Eq. \eqref{3.1a1}-\eqref{3.aaa} w.r.t. $\left\{\sin\frac{k\pi x}{l}\right\}_{k\in \mathbb{N}_+}$ is:
    \begin{equation}\label{3.a9}
    \begin{array}{cc}
     u(x,t)=\sum\limits_{k\in \mathbb{N}_{+}}\left(A_{k}\cos\frac{ak\pi }{l}t+\frac{l}{ak\pi }B_{k}\sin\frac{ak\pi}{l}t\right)\sin\frac{k\pi x}{l},
      \end{array}
    \end{equation}

    \begin{thm}\label{thm3.7} If
    \begin{equation}\label{ieq1}
    \begin{array}{cc}
      \sum\limits_{k\in \mathbb{N}_+}k^2|A_k|+k|B_k|<+\infty,
      \end{array}
    \end{equation}
    then the series \eqref{3.a9} is a solution of Eq. \eqref{3.1a1}-\eqref{3.aaa}.
    \end{thm}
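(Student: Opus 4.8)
The plan is to reduce everything to Theorem~\ref{thm3.3}: I will check that the series \eqref{3.a9} satisfies the regularity and term-by-term differentiation conditions \eqref{3.aa} — which are the specialization to the wave equation of the general conditions \eqref{4oooo} — and then Theorem~\ref{thm3.3} at once gives that \eqref{3.a9} solves \eqref{3.1a1}--\eqref{3.aaa}. Note that the boundary condition \eqref{3.aaa} holds termwise because $\sin(k\pi x/l)$ vanishes at $x=0$ and $x=l$, and once termwise evaluation at $t=0$ is justified the initial data \eqref{3.2a1} follow from $T_k(0)=A_k$ and $T_k'(0)=B_k$; so the real content of the proof is the verification of \eqref{3.aa}.

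First I would fix an arbitrary $T>0$ and work on the box $[0,l]\times[0,T]$ (since continuity is a local property, the statement on $[0,l]\times[0,+\infty)$ follows by letting $T\to+\infty$). Writing $\omega_k=\frac{ak\pi}{l}\neq0$, one has $T_k(t)=A_k\cos\omega_k t+\frac{B_k}{\omega_k}\sin\omega_k t$, hence, uniformly in $t$ and using $|\cos\omega_k t|,|\sin\omega_k t|\le1$, the bounds $|T_k(t)|\le|A_k|+|B_k|/\omega_k$, $|T_k'(t)|\le\omega_k|A_k|+|B_k|$, and $|T_k''(t)|\le\omega_k^2|A_k|+\omega_k|B_k|$. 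Together with $|\partial_x^{\,j}\sin(k\pi x/l)|\le(k\pi/l)^{\,j}$ for $j=0,1,2$, it follows that each term of the series obtained from \eqref{3.a9} by formal differentiation — for $u$, $u_x$, $u_t$, $u_{xx}$, and $u_{tt}$ — is bounded on $[0,l]\times[0,T]$, uniformly, by a constant depending only on $a$ and $l$ times $k^2|A_k|+k|B_k|$ (the worst case, occurring for $u_{xx}$ and $u_{tt}$) or by a smaller quantity. By the hypothesis \eqref{ieq1} all these majorant series converge, so the Weierstrass $M$-test makes each of the five series absolutely and uniformly convergent on $[0,l]\times[0,T]$.

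It then remains to pass from uniform convergence of the differentiated series to genuine differentiability of the sum. For this I would use the classical theorem on termwise differentiation (if $\sum_k g_k$ converges at a point and $\sum_k g_k'$ converges uniformly on an interval, then $\sum_k g_k$ is differentiable there with derivative $\sum_k g_k'$), applying it twice with respect to $x$ while $t$ is frozen, and twice with respect to $t$ while $x$ is frozen. This shows that the sum $u$ of \eqref{3.a9} has derivatives $u_{xx}$ and $u_{tt}$ equal to the corresponding termwise-differentiated series, i.e.\ precisely the identities in \eqref{3.aa}; those sums are continuous on $[0,l]\times[0,T]$ as uniform limits of continuous functions, so $u$ and $u_{xx}$, $u_{tt}\in C([0,l]\times[0,T])$. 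Hence \eqref{3.aa} holds, Theorem~\ref{thm3.3} applies, and evaluating the (uniformly convergent) series at $t=0$ recovers \eqref{3.2a1}.

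There is no deep obstacle here; the whole argument is a careful bookkeeping with the Weierstrass $M$-test. The one point genuinely needing attention is that $t$ ranges over the \emph{unbounded} interval $[0,+\infty)$, so the dominating bounds must be independent of $t$ — which is exactly why the estimates above are routed through $|\cos\omega_k t|,|\sin\omega_k t|\le1$ rather than through anything growing with $t$. Also, \eqref{ieq1} is essentially sharp for this method: the terms of $u_{xx}$ and $u_{tt}$ are of size comparable to $k^2|A_k|+k|B_k|$, and \eqref{ieq1} is precisely the condition that makes them summable.
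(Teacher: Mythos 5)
Your proposal is correct and follows exactly the route the paper takes: the paper's own proof is the single sentence that \eqref{ieq1} implies the conditions \eqref{3.aa}, whereupon Theorem~\ref{thm3.3} applies. Your write-up simply supplies the Weierstrass $M$-test bookkeeping and the termwise-differentiation argument that the paper leaves implicit, and those details are accurate.
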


   \begin{proof} The inequality \eqref{ieq1} implies that the series \eqref{3.a9} satisfies the conditions \eqref{3.aa}, so it
   is a solution of Eq. \eqref{3.1a1}-\eqref{3.aaa} by Theorem \ref{thm3.3}.
   \end{proof}\vskip8pt

The following PDEs could not be dealt with by using the method of separation of variables, one reason is that they are not separable. However,
   we can get the exact solutions of them.

    \begin{ex} (Second order hyperbolic equation)
   \begin{equation}\label{4a.1}
        \left\{
          \begin{array}{ll}
           u_{tt}+au_{xt}+bu_{xx}=0,\\
           a,b\in \mathbb{R},\ \Delta= a^{2}-4b>0,\ x\in  \mathbb{R}, \ t\geq 0,\\
           u(x,0)=\sum\limits_{k\in \mathbb{N}}A_k\cos\frac{k\pi x}{l},\ u_t(x,0)=\sum\limits_{k\in \mathbb{N}_+}B_k\sin\frac{k\pi x}{l}.
          \end{array}
    \right.
    \end{equation}
    \end{ex}

   We let
 \begin{equation*}
    A'_k=\left\{
          \begin{array}{ll}
              \frac{A_{k}}{2},\ k\in \mathbb{N}_{+};\\
              A_{0},\ k=0;\\
           \frac{A_{-k}}{2},\ -k\in \mathbb{N}_{+};
          \end{array}
    \right.\
    B'_k=\left\{
          \begin{array}{ll}
              \frac{B_{k}}{2i},\ k\in \mathbb{N}_{+};\\
           \frac{-B_{-k}}{2i},\ -k\in \mathbb{N}_{+}.
          \end{array}
    \right.
 \end{equation*}
   Then we have
   \begin{equation*}\label{4a.2}
    \left\{
          \begin{array}{ll}
          u(x,0)=\sum\limits_{k\in \mathbb{N}}A_k\cos\frac{k\pi x}{l}=\sum\limits_{k\in \mathbb{Z}}A'_ke^{\frac{ik\pi x}{l}};\\
           u_t(x,0)=\sum\limits_{k\in \mathbb{N}_+}B_k\sin\frac{k\pi x}{l}=\sum\limits_{k\in \mathbb{Z}\setminus\{0\}}B'_ke^{\frac{ik\pi x}{l}}.
          \end{array}
    \right.
   \end{equation*}
    Thus $u(x,t)\in FT(\mathbb{R}\oplus[0,\infty)),\ \left\{e^{\frac{ik\pi x}{l}}\right\}_{k\in \mathbb{Z}}$. So we let
    \begin{equation}\label{4a.3}
        u(x,t)=\sum\limits_{k\in \mathbb{Z}}T_{k}(t)e^{\frac{ik\pi x}{l}}.
    \end{equation}
      Suppose that the series \eqref{4a.3} satisfies  the following conditions:
     \begin{equation}\label{4.aa}
     \left\{
    \begin{array}{l}
      \sum\limits_{k\in \mathbb{Z}}T_{k}(t)e^{\frac{ik\pi x}{l}}\in C(\mathbb{R}\oplus[0,\infty)),\\
    \frac{\partial^2}{\partial x^2}\sum\limits_{k\in \mathbb{Z}}T_{k}(t)e^{\frac{ik\pi x}{l}}
    =\sum\limits_{k\in \mathbb{Z}}T_{k}(t)\left(e^{\frac{ik\pi x}{l}}\right)''\in C(\mathbb{R}\oplus[0,\infty)),\\
    \frac{\partial^2}{\partial x\partial t}\sum\limits_{k\in \mathbb{Z}}T_{k}(t)e^{\frac{ik\pi x}{l}}
    =\sum\limits_{k\in \mathbb{Z}}T'_{k}(t)\left(e^{\frac{ik\pi x}{l}}\right)'\in C(\mathbb{R}\oplus[0,\infty)),\\
     \frac{\partial^2}{\partial t^2}\sum\limits_{k\in \mathbb{Z}}T_{k}(t)e^{\frac{ik\pi x}{l}}
     =\sum\limits_{k\in \mathbb{Z}}T''_{k}(t)e^{\frac{ik\pi x}{l}}\in C(\mathbb{R}\oplus[0,\infty)).
   \end{array}
   \right.
   \end{equation}
    Then by substituting the series \eqref{4.aa} into Eq. \eqref{4a.1} we have
   \begin{equation*}\label{4a.4}
\left\{
   \begin{array}{l}
\sum\limits_{k\in \mathbb{Z}}\left(T_{k}''+\frac{ik\pi }{l}aT_{k}'-(\frac{k\pi }{l})^2bT_{k}\right)e^{\frac{ik\pi x}{l}}=0,\\
    u(x,0)=\sum\limits_{k\in \mathbb{Z}}T_{k}(0)e^{\frac{ik\pi x}{l}}=\sum\limits_{k\in \mathbb{Z}}A'_{k}e^{\frac{ik\pi x}{l}},\\
 u_t(x,0)=\sum\limits_{k\in \mathbb{Z}}T'_{k}(0)e^{\frac{ik\pi x}{l}}=\sum\limits_{k\in \mathbb{Z}\setminus \{0\}}B'_{k}e^{\frac{ik\pi x}{l}}.
   \end{array}
\right.
   \end{equation*}
  For any $k\in \mathbb{Z}$, let
  \begin{equation*}
  \left\{
  \begin{array}{l}
    T_{k}''+\frac{ik\pi }{l}a T_{k}'-\left(\frac{k\pi }{l}\right)^2bT_{k}=0, \\
   T_{k}(0)=A'_{k},\\
   T'_{k}(0)=B'_{k},
  \end{array}
  \right.
  \end{equation*}
   where $B'_0=0$. Then we  get the formal solution of Eq. \eqref{4a.1} w.r.t.
    $\left\{e^{\frac{ik\pi x}{l}}\right\}_{k\in \mathbb{Z}}$:
    \begin{equation}\label{4a.8}
      u=A_0+\sum\limits_{k\in  \mathbb{N}_+}\left[\frac{\left(\sqrt{\Delta}+a\right)k\pi A_k-2lB_{k}}{2k\pi \sqrt{\Delta}}
       \cos h_{1k}
       +\frac{\left(\sqrt{\Delta}-a\right)k\pi A_k+2lB_{k}}{2k\pi \sqrt{\Delta}}\cos h_{2k}\right],
    \end{equation}
 where
    \begin{equation*}
          h_{1k}=\frac{-a+\sqrt{\Delta}}{2}\frac{k\pi }{l}t+\frac{k\pi }{l}x,\ \ h_{2k}=\frac{-a-\sqrt{\Delta}}{2}\frac{k\pi }{l}t+\frac{k\pi }{l}x,
          \ \ \ \ k\in \mathbb{N}_+.
    \end{equation*}\vskip8pt

Similar as Theorem \ref{thm3.7}, we have

  \begin{thm} If
   \begin{equation*}
    \begin{array}{cc}
      \sum\limits_{k\in \mathbb{N}_+}k^2|A_k|+k|B_k|<+\infty,
      \end{array}
    \end{equation*}
    then the series \eqref{4a.8} is a solution of Eq. \eqref{4a.1}.
    \end{thm}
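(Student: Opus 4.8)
The plan is to follow the pattern of Theorem~\ref{thm3.7}: the computation preceding the statement already exhibits \eqref{4a.8} as the formal solution of Eq.~\eqref{4a.1} w.r.t. $\left\{e^{\frac{ik\pi x}{l}}\right\}_{k\in \mathbb{Z}}$, so by Theorem~\ref{thm3.3} it suffices to verify that, under the hypothesis $\sum_{k\in \mathbb{N}_+}k^2|A_k|+k|B_k|<+\infty$, the series \eqref{4a.8} satisfies the four conditions \eqref{4.aa}; that is, the series itself together with the three termwise-differentiated series (in $\partial_x^2$, $\partial_x\partial_t$, $\partial_t^2$) converge uniformly on $\mathbb{R}\oplus[0,\infty)$ to continuous functions, the uniform convergence of the differentiated series being exactly what licenses the interchange of summation and differentiation.

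First I would bound the two coefficients in \eqref{4a.8}. Since $\Delta=a^2-4b$ is a fixed positive number, there is a constant $C=C(a,b,l)$ such that
\begin{equation*}
\left|\frac{\left(\sqrt{\Delta}+a\right)k\pi A_k-2lB_{k}}{2k\pi \sqrt{\Delta}}\right|
+\left|\frac{\left(\sqrt{\Delta}-a\right)k\pi A_k+2lB_{k}}{2k\pi \sqrt{\Delta}}\right|
\le C\left(|A_k|+\frac{|B_k|}{k}\right),\qquad k\in \mathbb{N}_+.
\end{equation*}
Next, each differentiation in $x$ applied to $\cos h_{1k}$ or $\cos h_{2k}$ produces a bounded function times the factor $\frac{k\pi}{l}$, and each differentiation in $t$ produces a bounded function times a factor of modulus at most $\frac{k\pi}{l}\cdot\frac{|a|+\sqrt{\Delta}}{2}$. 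Hence the general term of the $u$-series is $\le C'\left(|A_k|+\frac{|B_k|}{k}\right)$ on all of $\mathbb{R}\oplus[0,\infty)$, and the general term of each of the $u_{xx}$-, $u_{xt}$- and $u_{tt}$-series is $\le C''\left(k^2|A_k|+k|B_k|\right)$ there, with $C',C''$ depending only on $a,b,l$; the stand-alone constant $A_0$ in \eqref{4a.8} contributes nothing to any derivative.

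The only thing left to check is that the hypothesis makes all these majorant series finite, which is immediate: $k\ge 1$ gives $|A_k|\le k^2|A_k|$ and $\frac{|B_k|}{k}\le k|B_k|$, so $\sum_{k\in\mathbb{N}_+}\left(|A_k|+\frac{|B_k|}{k}\right)\le \sum_{k\in\mathbb{N}_+}\left(k^2|A_k|+k|B_k|\right)<+\infty$, and the latter sum is finite by hypothesis. By the Weierstrass $M$-test the four series converge absolutely and uniformly on $\mathbb{R}\oplus[0,\infty)$; their sums are continuous as uniform limits of continuous functions, and the classical term-by-term differentiation theorem identifies each termwise-differentiated series with the corresponding derivative of $u$. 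Thus \eqref{4.aa} holds and Theorem~\ref{thm3.3} yields that \eqref{4a.8} solves Eq.~\eqref{4a.1}. There is no genuine obstacle; the point worth stressing is the bookkeeping showing that two derivatives in $x$ and/or $t$ cost precisely two powers of $k$, which is exactly the growth the hypothesis $\sum_{k\in \mathbb{N}_+} k^2|A_k|+k|B_k|<+\infty$ is designed to absorb.
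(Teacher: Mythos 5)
Your proposal is correct and follows exactly the route the paper intends: the paper proves this result by the one-line remark ``similar as Theorem \ref{thm3.7}'', i.e.\ the hypothesis forces the conditions \eqref{4.aa} and then Theorem \ref{thm3.3} applies. Your write-up simply supplies the Weierstrass $M$-test bookkeeping (each $x$- or $t$-derivative costing one factor of $k$) that the paper leaves implicit, so there is nothing to object to.
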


     \begin{ex} (Second order elliptic equation)
   \begin{equation}\label{4ae}
        \left\{
          \begin{array}{ll}
           u_{tt}+au_{xt}+bu_{xx}=0,\ \
           a,b>0,\ \Delta= a^{2}-4b<0,\ x\in  \mathbb{R}, \ t\geq 0,\\
            u(x,0)=\cos e^{2x},\ u_t(x,0)=\sin e^{2x}.
          \end{array}
    \right.
    \end{equation}
    \end{ex}

  Note that
  \begin{equation*}
    \begin{array}{c}
       u(x,0)=\cos e^{2x}=\sum\limits_{k\in \mathbb{N}} (-1)^k\frac{e^{4kx}}{(2k)!},\\
       u_t(x,0)=\sin e^{2x}=\sum\limits_{k\in \mathbb{N}_+} (-1)^{k-1}\frac{e^{2(2k-1)x}}{(2k-1)!}.
     \end{array}
  \end{equation*}
  So we have $u(x,t)\in FT(\mathbb{R}\oplus[0,\infty)),\ \{e^{2kx}\}_{k\in \mathbb{N}}$.  We can get the formal solution of Eq. \eqref{4ae} w.r.t. $\{e^{2kx}\}_{k\in \mathbb{N}}$:
  \begin{equation}\label{4aae}
    \begin{array}{r}
      u(x,t)=1+\sum\limits_{k\in  \mathbb{N}_+}\frac{(-1)^k\exp\left(2k(-at+2x)\right)}{(2k)!}\left(
      \cos2k\sqrt{-\Delta}t+\frac{a}{\sqrt{-\Delta}}\sin2k\sqrt{-\Delta}t\right)\\
      +\frac{(-1)^{k-1}\exp\left((2k-1)(-at+2x)\right)}{(2k-1)!(2k-1)\sqrt{-\Delta}}\sin(2k-1)\sqrt{-\Delta}t.
    \end{array}
    \end{equation}
  It's easy to prove that the series \eqref{4aae} is a solution of Eq. \eqref{4ae}.

     \begin{ex}
     \begin{equation}\label{oo}
        \left\{
          \begin{array}{ll}
           u_{t}-t(y-3)u_{xxy}=0,\  (x,y)\in \Omega\subseteq \mathbb{R}^2,\ t\geq 0, \\
           u(x,y,0)=x^3(x-\pi/2)^3\sin (y-3)^{\frac{3}{5}}.
          \end{array}
    \right.
    \end{equation}
    \end{ex}

    Note that
    \begin{equation*}
     x^3(x-\pi/2)^3\sin (y-3)^{\frac{3}{5}}=\sum\limits_{(k,m)\in \mathbb{N}_+^2}A_{km}(y-3)^{\frac{3(2m-1)}{5}}\sin 2kx,
    \end{equation*}
    where
    \begin{equation*}
        A_{km}= \frac{(-1)^{m-1}}{(2m-1)!}\frac{4}{\pi}\int_0^{\frac{\pi}{2}} x^3(x-\pi/2)^3\sin 2kx\text{d}x, \ \ \ \ (k,m)\in \mathbb{N}_+^2.
    \end{equation*}
    So $u(x,y,t)\in FT(\Omega\oplus [0,+\infty)),\ \left\{(y-3)^{\frac{3(2m-1)}{5}}\sin 2kx\right\}_{(k,m)\in \mathbb{N}^2_+}$. We can get the formal solution of Eq. \eqref{oo} w.r.t. $\left\{(y-3)^{\frac{3(2m-1)}{5}}\sin 2kx\right\}_{(k,m)\in \mathbb{N}^2_+}$:
    \begin{equation}\label{007}
         u(x,y,t)=\sum\limits_{(k,m)\in \mathbb{N}^2_+}A_{km}\exp\left(-\frac{6}{5}(2m-1) k^2 t^2\right)(y-3)^{\frac{3(2m-1)}{5}}\sin 2kx.
    \end{equation}
    Moreover, we can prove that
      \begin{equation*}
         \sum\limits_{k\in \mathbb{N}_+}\left|\frac{4k^2}{\pi}\int_0^{\frac{\pi}{2}} x^3(x-\pi/2)^3\sin 2kx\text{d}x\right|<+\infty,\ \ \ \ m\in \mathbb{N}_+.
    \end{equation*}
 So the series \eqref{007} is a solution of Eq. \eqref{oo}.

 \begin{ex}(Stokes Equations\cite{st0}-\cite{st2}).
 \begin{equation}\label{sss}
        \left\{
        \begin{array}{l}
           u_{jt}-\nu \sum\limits_{m=1}^{3}u_{jx_mx_m}+p_{x_j}=f_{j}(x,t),\ \ j=1,2,3, \\
           u_{1x_1}+u_{2x_2}+u_{3x_3}=0,\ \ \ \ \ \ t\geq 0,\  x=(x_1,x_2,x_3)\in \mathbb{R}^3, \\
            u_j(x,0)=\sum\limits_{k\in \mathbb{Z}^3}A_{jk}\varphi_{k},\ \  j=1,2,3,\\
            f_j(x,t)=\sum\limits_{k\in \in \mathbb{Z}^3}B_{jk}(t)\varphi_{k},\ \ j=1,2,3,
        \end{array}
        \right.
    \end{equation}
  where $\varphi_{k}=\exp(i\lambda_1k_1x_1+i\lambda_2k_2x_2+i\lambda_3k_3x_3)$, $\lambda_j\in \mathbb{R}\setminus\{0\},\ j=1,2,3,$ $\nu\geq 0$.
 \end{ex}

  Obviously $(u_1,u_2,u_3,p)^T\in FT(\mathbb{R}^3\oplus [0,+\infty)),\ \{\varphi_{k}\}_{k\in \mathbb{Z}^3}$.
  So we let
    \begin{equation}\label{s.4}
    \left\{
    \begin{array}{l}
      u_j(x,t)=\sum\limits_{k\in \mathbb{Z}^3}T_{jk}(t)\varphi_k,\ \ j=1,2,3;\\
      p(x,t)=\sum\limits_{k\in \mathbb{Z}^3}T_{4k}(t)\varphi_k.
    \end{array}
    \right.
    \end{equation}
   Suppose that the series \eqref{s.4} satisfy the following conditions:
    \begin{numcases}{}
         u_j=\sum\limits_{k\in \mathbb{Z}^3}T_{jk}(t)\varphi_k\in C(\mathbb{R}^3\oplus[0,+\infty)),\ \ \ \ j=1,2,3, \label{11nb}\\
        p=\sum\limits_{k\in \mathbb{Z}^3}T_{4k}(t)\varphi_{k}\in C(\mathbb{R}^3\oplus[0,+\infty)), \label{22nb}\\
        u_{jt}=\sum\limits_{k\in \mathbb{Z}^3}T'_{jk}(t)\varphi_{k}\in C(\mathbb{R}^3\oplus[0,+\infty)),\  \ \ \ j=1,2,3, \label{33nb}\\
         u_{jx_mx_m}=\sum\limits_{k\in \mathbb{Z}^3} -(\lambda_mk_m)^2T_{jk}(t)\varphi_{k}\in C(\mathbb{R}^3\oplus[0,+\infty)),\  \ \ \ m,j=1,2,3, \label{55nb}\\
         p_{x_j}=\sum\limits_{k\in \mathbb{Z}^3}i\lambda_jk_jT_{4k}(t)\varphi_{k}\in C(\mathbb{R}^3\oplus[0,+\infty)),\  \ \ \ j=1,2,3. \label{77nb}
    \end{numcases}
    By substituting the series \eqref{11nb}-\eqref{77nb} into Eq. \eqref{sss} we get
    \begin{equation*}
    \left\{
      \begin{array}{l}
        \sum\limits_{k\in \mathbb{Z}^3}[T'_{jk}+\sum\limits_{m=1}^{3}\nu (\lambda_mk_m)^2T_{jk}
    +i\lambda_jk_j T_{4k}-B_{jk}]\varphi_{k}=0,\ \
     j=1,2,3,\\
     \sum\limits_{k\in \mathbb{Z}^3}(i\lambda_1k_1 T_{1k}+i\lambda_2k_2 T_{2k}+i\lambda_3k_3 T_{3k})\varphi_{k}=0,\\
     u_j(x,0)=\sum\limits_{k\in \mathbb{Z}^3}A_{jk}\varphi_{k}=\sum\limits_{k\in \mathbb{Z}^3}T_{jk}(0)\varphi_k,\ \  j=1,2,3.
      \end{array}
      \right.
    \end{equation*}
    For any $k\in \mathbb{Z}^3$, we let
     \begin{equation}\label{sim2}
      \left\{
      \begin{array}{r}
         T'_{jk}+ \sum\limits_{m=1}^{3}\nu (\lambda_mk_m)^2T_{jk}+i\lambda_jk_j T_{4k}-B_{jk}=0,\ \ j=1,2,3,\\
        \lambda_1k_1 T_{1k}+\lambda_2k_2 T_{2k}+\lambda_3k_3 T_{3k}=0,\ \ \ \ \ \ \ \ \ \ \ \ \ \ \ \ \ \ \ \ \ \ \ \ \\
          T_{jk}(0)=A_{jk},\ \ \ \ \ \ \ \ \ \ \ \ \ \ \ \ \ \ \ \ \ \ \ \ j=1,2,3.
      \end{array}
      \right.
    \end{equation}
     For every $j=1,2,3$, the first equation in Eq. \eqref{sim2} is multiplied by $\lambda_jk_j$, then we can induce that
     \begin{equation*}
    T_{4k}\sum\limits_{j=1}^{3}i(\lambda_jk_j)^2-\sum\limits_{j=1}^{3}B_{jk}\lambda_jk_j=0,\ \  k\in \mathbb{Z}^3.
    \end{equation*}
   Hence we have
    \begin{equation*}\label{3s.3}
    \left\{
    \begin{array}{l}
    T_{4,(0,0,0)}=a,\ \ \ \ \ \ \ \  a\ \text{is an arbitrary constant,}\\
    T_{4k}=\frac{\sum\limits_{j=1}^{3}k_j\lambda_jB_{jk}(t)}{\sum\limits_{j=1}^{3}i(k_j\lambda_j)^2}, \ \ \ \ \ \ k\in \mathbb{Z}^3\setminus\{0\},\\
    T_{jk}=\exp\left( -\sum\limits_{m=1}^{3}\nu (k_m\lambda_m)^2t\right)
    \left(\int\limits_{0}^{t}\left(B_{jk}(s)-iT_{4k}(s)k_j\lambda_j\right)\exp\left( \sum\limits_{m=1}^{3}\nu (k_m\lambda_m)^2s\right)\text{d}s+A_{jk}\right),\\
    \ \ \ \ \ \ \ \ \ \ \  \ \ \ \ \ \ \ \ \ \ \  \ \ \ \ \ \ \ \ \ \ \ \ \ \  \ \ \ \ \ \ \ \ \ \ \ \ \ \ \ \ \ \ \ \  \ \ \ \ \ \ \ \ \ \ \ \
     \ \ \ \ \ \ \ \ \  \ \ \ \ \ \ \ \ \ \ \ \ \ \ \ \ \ \ \ \ \ \ \ \ \ \ \ \ \ \ \ \ \ \ \ \ \ \ \ \ \ \ \
   j=1,2,3, \ \  k\in \mathbb{Z}^3.
    \end{array}
    \right.
    \end{equation*}\vskip8pt

    Clearly we have:

     \begin{thm}\label{thmst1} If
    \begin{equation}\label{ieq22}
      \sum\limits_{k\in \mathbb{Z}^3}\ \sum\limits_{m=1}^{3}k_m^2\left(|B_{jk}(t)|+|A_{jk}|\right)<+\infty,\ \ \ \ t\geq 0,\ \ m=1,2,3,
    \end{equation}
    then the series \eqref{s.4} we obtain is a solution of Eq. \eqref{sss}.
    \end{thm}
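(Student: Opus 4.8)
The plan is to deduce Theorem \ref{thmst1} from Theorem \ref{thm3.3}. The series \eqref{s.4} with the $T_{jk},T_{4k}$ computed above is, by construction, the formal solution of \eqref{sss} with respect to $\{\varphi_k\}_{k\in\mathbb{Z}^3}$, so it only remains to show that the hypothesis \eqref{ieq22} forces the regularity conditions \eqref{11nb}--\eqref{77nb}. Since $|\varphi_k|\equiv1$, each of those series is dominated term by term by the series of absolute values of its coefficients; hence, by the Weierstrass $M$-test (with the supremum taken over a compact $t$-interval), it suffices to prove finiteness of
$\sum_k\sup_t|T_{jk}(t)|$, $\sum_k\sup_t|T_{4k}(t)|$, $\sum_k\sup_t|T'_{jk}(t)|$, $\sum_k(\lambda_mk_m)^2\sup_t|T_{jk}(t)|$ and $\sum_k|k_j|\sup_t|T_{4k}(t)|$.

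The first elementary fact I would record is that for $k\in\mathbb{Z}^3\setminus\{0\}$ the denominator $\sum_{m=1}^3(\lambda_mk_m)^2$ never degenerates, since $\sum_m(\lambda_mk_m)^2\ge c\sum_mk_m^2$ with $c=\min_m\lambda_m^2>0$; combining this with the termwise bound $(\lambda_jk_j)^2\le\sum_m(\lambda_mk_m)^2$ and the AM--GM inequality $\lambda_j^2k_j^2+\lambda_l^2k_l^2\ge2|\lambda_jk_j\lambda_lk_l|$ for the cross terms yields $|T_{4k}(t)|\le C\sum_{l=1}^3|B_{lk}(t)|$ and $|k_j|\,|T_{4k}(t)|\le C\sum_{l=1}^3|B_{lk}(t)|$. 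Using $|k_j|\le\sum_mk_m^2$ for integer vectors, both right-hand sides are $\le C\sum_l\sum_mk_m^2|B_{lk}(t)|$, which is summable over $k$ by \eqref{ieq22}; this already gives \eqref{22nb} and \eqref{77nb}. The same two estimates also control the forcing term $i\lambda_jk_jT_{4k}$ appearing in the $T_{jk}$-equation.

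For the $T_{jk}$ I would split on $\nu$. When $\nu>0$, Duhamel's formula gives, with $\sigma_k:=\nu\sum_m(\lambda_mk_m)^2$,
$|T_{jk}(t)|\le e^{-\sigma_kt}|A_{jk}|+\sigma_k^{-1}\sup_{[0,t]}\bigl|B_{jk}-i\lambda_jk_jT_{4k}\bigr|$; since $\sigma_k^{-1}\le\nu^{-1}c^{-1}(\sum_mk_m^2)^{-1}\le\nu^{-1}c^{-1}$ and the last supremum is $\le C\sum_l|B_{lk}|$, we get $\sum_k\sup_t|T_{jk}|<\infty$, and because $(\lambda_mk_m)^2\sigma_k^{-1}\le\nu^{-1}$ and $\sigma_ke^{-\sigma_kt}\le\sigma_k\le C\sum_mk_m^2$ we likewise get $\sum_k(\lambda_mk_m)^2\sup_t|T_{jk}|<\infty$ — here the parabolic factor $e^{-\sigma_kt}$ absorbs the two lost derivatives on $A_{jk}$, leaving only $\sigma_k|A_{jk}|\le C\sum_mk_m^2|A_{jk}|$, controlled by \eqref{ieq22}. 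When $\nu=0$ one instead has $T_{jk}(t)=A_{jk}+\int_0^t(B_{jk}-i\lambda_jk_jT_{4k})\,ds$, bounded by $|A_{jk}|+t\sup_{[0,t]}(\cdots)$, again summable, and the term $u_{jx_mx_m}$ then drops out of \eqref{sss} so that \eqref{55nb} is not needed. This establishes \eqref{11nb} and \eqref{55nb}. Finally $T'_{jk}$ is read off the ODE \eqref{sim2} as $T'_{jk}=-\sigma_kT_{jk}-i\lambda_jk_jT_{4k}+B_{jk}$ (or $T'_{jk}=B_{jk}-i\lambda_jk_jT_{4k}$ when $\nu=0$), so $\sum_k\sup_t|T'_{jk}|$ is a finite combination of sums already shown convergent, which gives \eqref{33nb}. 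Assembling everything with the $M$-test verifies \eqref{11nb}--\eqref{77nb}, and Theorem \ref{thm3.3} concludes.

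I expect the main obstacle to be the coupling through the pressure: $T_{jk}$ depends on $T_{4k}$, and $T_{4k}$ is obtained by dividing by $\sum_m(\lambda_mk_m)^2$, so one must check that this division, possibly followed by multiplication by $k_j$ or by $(\lambda_mk_m)^2$, does not undo the decay furnished by \eqref{ieq22} — precisely the role of the inequalities $\frac{|k_j|}{\sum_mk_m^2}\le1$, $\frac{(\lambda_mk_m)^2}{\sum_l(\lambda_lk_l)^2}\le1$ and the AM--GM estimate on the cross terms. A secondary, purely bookkeeping point is the degenerate case $\nu=0$, where one must note that the second-order spatial terms disappear from the equation so that no smoothing is required, and the weaker bounds on $T_{jk}$ suffice.
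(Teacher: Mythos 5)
Your proof is correct and follows the only route the paper intends: verify the convergence conditions \eqref{11nb}--\eqref{77nb} from \eqref{ieq22} via the Weierstrass $M$-test (using $|\varphi_k|\equiv 1$ together with the bounds $|T_{4k}|,\ |k_jT_{4k}|\le C\sum_l|B_{lk}|$ and the Duhamel estimate for $T_{jk}$) and then invoke Theorem \ref{thm3.3}. The paper itself offers no argument beyond the word ``Clearly,'' so your estimates simply supply the details it omits.
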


    If $u_j(x,0),\ f_j(x,t)\ j=1,2,3$ are the real-valued functions, then we have $A_{jk}=\overline{A_{j,-k}}$, $B_{jk}(t)=\overline{B_{j,-k}(t)}$, $j=1,2,3,\ k\in \mathbb{Z}^3$. So we can induce that:

    \begin{thm}\label{thmst1} If $u_j(x,0),\ f_j(x,t)\ j=1,2,3$ are the real-valued functions, then so do the functions \eqref{s.4} we obtain.
    \end{thm}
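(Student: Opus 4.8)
The plan is to derive the reality of \eqref{s.4} from a single structural fact about the ODE system \eqref{sim2}: the coefficient functions it produces satisfy the Hermitian symmetry $T_{j,-k}(t)=\overline{T_{jk}(t)}$ for every $k\in\mathbb{Z}^3$ and every $j\in\{1,2,3,4\}$. Assume for the moment this symmetry holds. Since $\overline{\varphi_k}=\varphi_{-k}$, I group the sum in \eqref{s.4} shell by shell, i.e. according to $|k|=m$, which is exactly how $\sum_{k\in\mathbb{Z}^3}$ is defined in the Notation and each shell is a finite set invariant under $k\mapsto-k$. Within a shell the $k$ and $-k$ contributions combine into $T_{jk}(t)\varphi_k+T_{j,-k}(t)\varphi_{-k}=T_{jk}(t)\varphi_k+\overline{T_{jk}(t)\varphi_k}=2\operatorname{Re}\!\bigl(T_{jk}(t)\varphi_k\bigr)\in\mathbb{R}$, while the single term with $k=(0,0,0)$ equals $T_{j,(0,0,0)}(t)$, which is real because the symmetry relation at $k=-k=(0,0,0)$ forces $T_{j,(0,0,0)}=\overline{T_{j,(0,0,0)}}$. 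Hence every partial sum over $\bigcup_{m\le M}\{|k|=m\}$ is real, and so is its limit; the same argument applies to $p(x,t)$.

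It remains to establish the symmetry of the $T_{jk}$. First, because $u_j(x,0)$ and $f_j(x,t)$ are real-valued, comparing their $\{\varphi_k\}$-expansions with the conjugate expansions gives $A_{j,-k}=\overline{A_{jk}}$ and $B_{j,-k}(t)=\overline{B_{jk}(t)}$ — the relation noted just before the theorem. For the pressure coefficients with $k\ne0$ one has $T_{4k}=\bigl(\sum_{j}k_j\lambda_j B_{jk}(t)\bigr)\big/\bigl(\sum_j i(k_j\lambda_j)^2\bigr)$; replacing $k$ by $-k$ multiplies the numerator by $-1$ and conjugates each $B_{jk}$, whereas conjugating $T_{4k}$ multiplies the purely imaginary denominator by $-1$ and conjugates the numerator, and since $k_j,\lambda_j\in\mathbb{R}$ these two operations give the same expression, so $T_{4,-k}(t)=\overline{T_{4k}(t)}$. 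For $k=(0,0,0)$ we simply take the free constant $a=T_{4,(0,0,0)}$ to be real, which is the real solution the theorem refers to. Finally, for $j\in\{1,2,3\}$ use the explicit formula obtained above: the damping rate $\sum_m\nu(k_m\lambda_m)^2$ is real and unchanged under $k\mapsto-k$, so the two exponential prefactors coincide for $k$ and $-k$; inside the time integral, replacing $k$ by $-k$ sends the integrand $B_{jk}(s)-iT_{4k}(s)k_j\lambda_j$ to $B_{j,-k}(s)-iT_{4,-k}(s)(-k_j)\lambda_j=\overline{B_{jk}(s)}+i\overline{T_{4k}(s)}k_j\lambda_j=\overline{B_{jk}(s)-iT_{4k}(s)k_j\lambda_j}$, the weight $\exp(\sum_m\nu(k_m\lambda_m)^2 s)$ is real, and the added initial value $A_{jk}$ becomes $\overline{A_{jk}}$; hence the whole bracket for $-k$ is the complex conjugate of that for $k$, i.e. $T_{j,-k}(t)=\overline{T_{jk}(t)}$.

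Combining the three cases gives the symmetry for all $j$ and all $k$, and the conclusion then follows from the first paragraph. The only point requiring genuine care is the conjugation bookkeeping for $T_{4k}$ and its propagation into the velocity integrals, since the denominator $\sum_j i(k_j\lambda_j)^2$ is \emph{anti}-invariant (rather than invariant) under complex conjugation, so a sign is easy to mishandle there; the shell decomposition of the series and the reality of the $k=(0,0,0)$ terms are immediate from the definitions. If one wants the statement for the genuine solution rather than the formal one, it suffices to add the convergence hypothesis \eqref{ieq22}, so that each real partial sum converges, which leaves the argument unchanged.
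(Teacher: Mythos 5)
Your proof is correct and follows exactly the route the paper intends (the paper itself offers no proof beyond noting $A_{jk}=\overline{A_{j,-k}}$ and $B_{jk}(t)=\overline{B_{j,-k}(t)}$ and saying the claim can be ``induced''): you verify the Hermitian symmetry $T_{j,-k}(t)=\overline{T_{jk}(t)}$ directly from the explicit formulas and then pair the $k$ and $-k$ terms within each shell $|k|=m$. Your observation that the free constant $a=T_{4,(0,0,0)}$ must be chosen real is a worthwhile precision the paper omits.
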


\section{Series solutions to the cauchy problem for some more general LPDEs} \label{secl}

In this section, using an iterative method with respect to \eqref{1.bf}, we deal with several LPDEs. This technique can solve many LPDEs.

 \begin{ex}\label{exl}
     \begin{equation}\label{ma.1}
        \left\{
          \begin{array}{ll}
           u_{y}-u_{xy}-\left(e^{e^{-(x+2)}}-1\right)u=ye^{-(x+2)},\\
             (x,y)\in \Omega=\{(x,t)\mid x>0,\ 0\leq y\leq x\}, \\
           u(x,0)=1+e^{-(x+2)}.
          \end{array}
    \right.
    \end{equation}
     \end{ex}

    Clearly we have
    \begin{equation*}
      \exp(e^{-(x+2)})-1=\sum_{k\in \mathbb{N}_+}\frac{e^{-k(x+2)}}{k!}.
    \end{equation*}
     Next we set
    \begin{equation}\label{ma.2}
       u(x,y)=\sum\limits_{k\in \mathbb{N}}T_k(y)e^{-k(x+2)}.
    \end{equation}
    Suppose that the series \eqref{ma.2} satisfies  the following conditions:
    \begin{numcases}{}
         u=\sum\limits_{k\in \mathbb{N}}T_k(y)e^{-k(x+2)}\in C(\Omega),\label{zb1}\\
     u_t= \sum\limits_{k\in \mathbb{N}}T'_k(y)e^{-k(x+2)}\in C(\Omega),\label{zb2}\\
        u_{xt}= \sum\limits_{k\in \mathbb{N}_+}-kT'_k(y)e^{-k(x+2)}\in C(\Omega),\label{zb4}\\
        \left(e^{e^{-(x+2)}}-1\right)u= \sum\limits_{k\in \mathbb{N}_+}\ \sum\limits_{m=0}{k-1}\frac{T_m(y)}{(k-m)!}e^{-k(x+2)} \in C(\Omega).
        \label{zb5}
    \end{numcases}
    Substituting the series \eqref{ma.2} into the equations \eqref{ma.1} we have
    \begin{equation*}\label{ma.4}
\left\{
    \begin{array}{c}
     T'_0(y)+(2T'_1(y)-T_0(y)-t)e^{-(x+2)}
    +\sum\limits_{k=2}^{+\infty} \left[(k+1)T'_k(y)
    -\sum\limits_{m=0}^{k-1}\frac{1}{(k-m)!}T_m(y)\right]e^{-k(x+2)}=0,\\
 u(x,0)=\sum\limits_{k\in \mathbb{N}}T_k(0)e^{-k(x+2)}=1+e^{-(x+2)}.
    \end{array}
\right.
    \end{equation*}
    Let
   \begin{equation*}\label{ma.5}
   \left\{
    \begin{array}{r}
    T'_0(t)=0, \ \ \ \ \ \ \ \ \ \ \ \ \ \ \ \ \ \  \ \ \ \ \ \ T_0(0)=1, \ \ \ \ \ \  \ \ \ \ \  \\
     2T'_1(t)-T_0(t)-t=0,  \ \ \ \ \ \ \ \ \ \ \ \  \ \ \ \ \ \ T_1(0)=1, \ \ \ \ \ \  \ \ \ \ \  \\
    (k+1)T'_k(t)-\sum\limits_{m=0}^{k-1}\frac{1}{(k-m)!}T_m(t)=0, \ \ \ \ \ \ T_k(0)=0,\ \ k\geq 2.
    \end{array}
    \right.
    \end{equation*}
   Then we have
   \begin{equation*}\label{ma.6}
   T_k(t)=
   \left\{
    \begin{array}{r}
    1,\ \ \ \ \ \ \ \ \ \ \ \ \ \ \ \ \ \ \ \ \ \ \ \ k=0, \\
     \frac{1}{4}t^2+\frac{1}{2}t+1,\ \ \ \ \ \ \ \ \ \ \ \ \ \ \ \  k=1, \\
    \frac{1}{k+1}\int\limits_{0}^{t}\sum\limits_{m=0}^{k-1}\frac{1}{(k-m)!}T_m(s)\text{d}s,\ \ \ \ k\geq 2.
    \end{array}
    \right.
    \end{equation*}
    Next we prove that the formal solution \eqref{ma.2} is also a solution of \eqref{ma.1}.
     By the induction method, we can prove that
    \begin{equation*}\label{tt1}
     0<T_k(y)\leq e^{ky},\ \ \ \ k\in \mathbb{N}.
    \end{equation*}
    So we have
    \begin{equation*}
      0<T_k(y)e^{-k(x+2)}\leq e^{-k(x-y)-2k}\leq e^{-2k},\ \ \ \ (x,t)\in \Omega,\  k\in \mathbb{N}.
    \end{equation*}
    Hence the series \eqref{ma.2} converges uniformly on $\Omega$.
    It means that the formal solution \eqref{ma.2} satisfies \eqref{zb1}.  Moreover, we can prove that
    \begin{equation*}
    \left\{
    \begin{array}{r}
       |T'_k(y)e^{-k(x+2)}|=\frac{1}{k+1}\left|\sum_{m=0}^{k-1}\frac{1}{(k-m)!}T_m(y)\right|e^{-k(x+2)}\leq e^{-2k},\ \ \ \ k\geq 2,\\
      |-kT'_k(y)e^{-k(x+2)}|\leq ke^{-2k},\ \ \ \ \ \ \ \ \ \ \ \ \ \ \ \ \ \ \ \ \ \ \ \ \ \ \ \ k\geq 2,\\
      \left|\sum_{m=0}^{k-1}\frac{1}{(k-m)!}T_m(y)e^{-k(x+2)}\right|\leq ke^{-2k},\ \ \ \  \ \ \ \ \ \ \ \ \ \ \ \ \ \ \ \ \ k\geq 2.
    \end{array}
    \right.
    \end{equation*}
    So the series \eqref{ma.2} we obtain satisfies \eqref{zb2}-\eqref{zb5}.
    Therefore it is a solution of \eqref{ma.1}.

    \begin{ex}
 \begin{equation}\label{5551}
 \left\{
   \begin{array}{l}
      u_{t}+u+(x+3)^{\frac{2}{4}}u_{x}=0,\ \ \ \ x\geq0,\ t\geq 0,\\
      u(x,0)=\sin\ (x+3)^{-\frac{1}{4}}=\sum\limits_{k\in \mathbb{N}_+}\frac{(-1)^{k+1}(x+3)^{-\frac{2k-1}{4}}}{(2k-1)!}.
    \end{array}
    \right.
 \end{equation}
  \end{ex}

 Similar as Example \ref{exl}, we can get a solution:
\begin{equation*}
  u(x,t)=\sum\limits_{k\in \mathbb{N}_+}T_{k}(t)(x+3)^{-\frac{k}{4}},
\end{equation*}
where
\begin{equation*}
T_{k}(t)=\left\{
\begin{array}{l}
   e^{-t},\ \ \ \ \ \ \ \ k=1,\\
   \ 0,\ \ \ \ \ \ \ \ \ \ k=2,4,6,\cdots,\\
e^{-t}\left(\int_0^t\frac{k-2}{4} T_{k-2}(s)e^{s}\text{d}s +\frac{(-1)^{\frac{k-1}{2}}}{k!}\right),\ \ \ \ k=3,5,7,\cdots.
\end{array}
  \right.
\end{equation*}

\section{Series solutions to the cauchy problem for some NPDEs}

In this section, similar as Section \ref{secl}, using an iterative method with respect to \eqref{1.bf}, we deal with several NLPEs. This technique also can solve many NPDEs.

 \begin{lem}\label{idl} (Abel identities \cite{ci}) For every $k\in \mathbb{N}_+$, we have
      \begin{equation*}
        k(k+1)^k=\sum\limits_{m=1}^{k}\left(
                              \begin{array}{c}
                                k+1 \\
                                m
                              \end{array}
                            \right)
                            m^m(k+1-m)^{k-m},
      \end{equation*}
      where $\left(
                              \begin{array}{c}
                                k+1 \\
                                m
                              \end{array}
                            \right)=\frac{(k+1)!}{m!(k+1-m)!}$.
 \end{lem}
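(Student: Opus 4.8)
The plan is to deduce the identity from Abel's binomial theorem by a single differentiation followed by the evident reflection symmetry of the summand; no combinatorics is needed. Throughout set $n=k+1$, so that the assertion becomes
\begin{equation*}
\sum_{m=1}^{n-1}\binom{n}{m}m^{m}(n-m)^{n-1-m}=(n-1)n^{n-1},
\end{equation*}
with the convention $0^{0}=1$ (used only in the $m=n-1$ term).

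First I would invoke Abel's binomial theorem in the polynomial-identity form
\begin{equation*}
(z+w)^{n}=\sum_{m=0}^{n}\binom{n}{m}\,z\,(z+m)^{m-1}(w-m)^{n-m},
\end{equation*}
which I take as known (it is the classical Abel identity recorded in \cite{ci}). Differentiating both sides with respect to $z$ and then setting $z=0$ annihilates the $m=0$ term --- there $z(z+0)^{-1}\equiv 1$, a constant --- and, for $m\ge 1$, replaces $z(z+m)^{m-1}$ by $m^{m-1}$; this leaves $n\,w^{n-1}=\sum_{m=1}^{n}\binom{n}{m}m^{m-1}(w-m)^{n-m}$. Specializing $w=n$ gives the intermediate identity
\begin{equation*}
\sum_{m=1}^{n}\binom{n}{m}m^{m-1}(n-m)^{n-m}=n^{n}. \tag{$\star$}
\end{equation*}

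To finish, I would peel off the $m=n$ summand of $(\star)$, which equals $n^{n-1}$ (using $0^{0}=1$), obtaining $\sum_{m=1}^{n-1}\binom{n}{m}m^{m-1}(n-m)^{n-m}=n^{n}-n^{n-1}$, and then apply the substitution $m\mapsto n-m$, a bijection of $\{1,\dots,n-1\}$, under which the left-hand sum is turned term by term into $\sum_{m=1}^{n-1}\binom{n}{m}m^{m}(n-m)^{n-1-m}$. Hence this last sum equals $n^{n}-n^{n-1}=(n-1)n^{n-1}$; restoring $n=k+1$ gives
\begin{equation*}
\sum_{m=1}^{k}\binom{k+1}{m}m^{m}(k+1-m)^{k-m}=(k+1)^{k+1}-(k+1)^{k}=k(k+1)^{k},
\end{equation*}
as claimed.

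The only step that is not pure bookkeeping is $(\star)$, and even there the work is just confirming that the two superficially singular terms of Abel's identity are harmless under $\partial_{z}$ at $z=0$: the $m=0$ term is constant in $z$, and the $m=1$ term is $z\,(w-1)^{n-1}$, whose $z$-derivative $(w-1)^{n-1}$ fits the general pattern $\binom{n}{m}m^{m-1}(w-m)^{n-m}$ at $m=1$. Should one prefer a generating-function derivation, an equally short route uses the tree series $T(x)=\sum_{n\ge 1}\frac{n^{n-1}}{n!}x^{n}$, the solution of $T=xe^{T}$: Lagrange inversion gives $\frac{1}{1-T(x)}=\sum_{n\ge 0}\frac{n^{n}}{n!}x^{n}$, hence $\frac{T}{1-T}=\frac{1}{1-T}-1$, and comparing the coefficient of $x^{n}$ on both sides (the left side being the Cauchy product of $\sum_{a}\frac{a^{a}}{a!}x^{a}$ and $\sum_{b}\frac{b^{b-1}}{b!}x^{b}$) and clearing $n!$ reproduces the reindexed identity directly. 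I would present the Abel-theorem argument as primary, since it requires nothing beyond the classical identity quoted in \cite{ci}.
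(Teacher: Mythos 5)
Your argument is correct; I checked the intermediate identity $(\star)$ and the final reindexed form against $k=1,2,3$ and everything matches, and the two delicate points (the $m=0$ term of Abel's identity being constant in $z$, and the $0^{0}=1$ convention in the $m=n$ term you peel off) are both handled explicitly. Note, however, that the paper itself offers no proof of this lemma at all: it is simply quoted from Riordan's \emph{Combinatorial Identities} and used as a black box in Example 5.2 to close the recursion for the inviscid Burgers' coefficients. So your derivation is not an alternative to the paper's argument but a genuine supplement to it. What your route buys is self-containedness at the cost of assuming the polynomial form of Abel's binomial theorem
$(z+w)^{n}=\sum_{m=0}^{n}\binom{n}{m}z(z+m)^{m-1}(w-m)^{n-m}$; the differentiation-at-$z=0$ step producing
$n\,w^{n-1}=\sum_{m=1}^{n}\binom{n}{m}m^{m-1}(w-m)^{n-m}$ is the classical Cauchy companion identity, and from there the specialization $w=n$, removal of the $m=n$ term, and the reflection $m\mapsto n-m$ are pure bookkeeping, exactly as you say. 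The generating-function variant you sketch (Cauchy product of $\sum_{a\ge 1}\frac{a^{a-1}}{a!}x^{a}$ with $\sum_{b\ge 0}\frac{b^{b}}{b!}x^{b}$ via the tree function $T=xe^{T}$) is equally valid and arguably closer in spirit to why the identity arises in the Burgers' example, where the convolution $\sum_{r=1}^{k-1}rT_{r}T_{k-r}$ is precisely such a Cauchy product; either presentation would be a legitimate replacement for the bare citation.
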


 \begin{ex}\label{exn1} (Inviscid Burgers' equation).
   \begin{equation}\label{1IB}
  \left\{
  \begin{array}{l}
     u_t+uu_x=0,\ \ (x,t)\in \Omega=\{(x,t)\mid t\geq 0,\ x\in [0,11]\},\\
   u(x,0)=1+e^{x-12}.
  \end{array}
  \right.
   \end{equation}
    \end{ex}

Next we let
\begin{equation}\label{IB1}
  u(x,t)=\sum\limits_{k\in \mathbb{N}}T_{k}(t)e^{k(x-12)}.
\end{equation}
Suppose that the following conditions hold:
 \begin{numcases}{}
 u=\sum\limits_{k\in \mathbb{N}}T_{k}(t)e^{k(x-12)}\in C(\Omega), \label{IB2}\\
  u_t=\sum\limits_{k\in \mathbb{N}}T'_{k}(t)e^{k(x-12)}\in C(\Omega), \label{IB3}\\
  u_x=\sum\limits_{k\in \mathbb{N}_+}k T_{k}(t)e^{k(x-12)}\in C(\Omega),\label{IB4}\\
 uu_x=\sum\limits_{k\in \mathbb{N}_+}\ \sum\limits_{r=1}^{k}r T_{r}(t)T_{k-r}(t)e^{k(x-12)}\in C(\Omega).\label{IB5}
    \end{numcases}
Substituting the series \eqref{IB1} into \eqref{1IB}, we get
\begin{equation*}\label{IB6}
\left\{
\begin{array}{c}
 T'_{0}+(T'_{1}+T_0T_1)e^{x-12}
 +\sum\limits_{k=2}^{+\infty}(T'_{k}+kT_0T_k+\sum\limits_{r=1}^{k-1}rT_{r}T_{k-r})e^{k(x-12)}=0,\\
 u(x,0)=\sum\limits_{k\in \mathbb{N}}T_{k}(0)e^{k(x-12)}=1+e^{x-12}
\end{array}
\right.
\end{equation*}
Note that the sequence $\{e^{k(x-12)}\}_{k\in \mathbb{N}}$ is linearly independent, so we have
\begin{equation*}
\left\{
\begin{array}{r}
    T'_{0}=0,\ \ \ \ \ \ \ \ \ \ \ \ \ \ \ \ \ \ \ \ \  \  \  \ T_{0}(0)=1,\ \ \ \ \ \ \ \ \  \ \\
    T'_{1}+T_0T_1=0,\ \ \ \ \ \ \ \ \ \ \ \ \ \  \  \ T_{1}(0)=1,\ \ \ \ \ \  \ \ \  \ \\
   T'_{k}+kT_0T_k+\sum\limits_{r=1}^{k-1}rT_{r}T_{k-r}=0, \ \ \  \ T_{k}(0)=0, \ \ k\geq 2.
\end{array}
  \right.
\end{equation*}
Then by Lemma \ref{idl}, we can get
\begin{equation*}
 T_{k}(t)=\left\{
\begin{array}{l}
    \ \ 1,\ \ \ \ \ \ \ \ \ k=0,\\
    e^{-t},\ \ \ \ \ \ \ \ k=1,\\
e^{-kt}\int\limits_0^t \sum\limits_{r=1}^{k-1}-rT_{r}(s)T_{k-r}(s)e^{ks}\text{d}s=(-1)^{k+1}\frac{k^{k-1}}{k!} t^{k-1}e^{-kt},\ \ \ \  k\geq 2.
\end{array}
  \right.
\end{equation*}
So we get
\begin{equation}\label{IBj}
   u(x,t)=1+e^{-t+x-12}+\sum\limits_{k\geq 2}(-1)^{k+1}\frac{k^{k-1}}{k!} t^{k-1}e^{k(-t+x-12)}.
\end{equation}\vskip8pt

Next we prove that the series \eqref{IBj} satisfies \eqref{IB2}-\eqref{IB5}.
Note that $\frac{k^m}{m!}t^m\leq e^{kt},\  m,k\in \mathbb{N}$, so we have
\begin{equation*}
 |T_k(t)e^{k(x-12)}|\leq  \frac{1}{k}e^{k(x-12)}\leq  \frac{1}{k}e^{-k},\ \ \ \ k\geq 2.
\end{equation*}
So the series \eqref{IBj} converges uniformly on $\Omega$.
    It means that the formal solution \eqref{IBj} satisfies \eqref{IB2}.
    Moreover, we can prove that
    \begin{equation*}
    \left\{
    \begin{array}{r}
    |\sum\limits_{r=1}^{k-1}rT_{r}T_{k-r}|=\frac{(k-1)k^{k-1}}{k!}t^{k-2}e^{-kt}=\frac{k^{k-2}}{(k-2)!}t^{k-2}e^{-kt}\leq 1,\ \ \ \ \ \ k\geq 2,\\
      |T'_k(t)e^{k(x-12)}|=|kT_0T_k+\sum\limits_{r=1}^{k-1}rT_{r}T_{k-r}|e^{k(x-12)}\leq 2e^{-k},\ \ \ \ \ \ k\geq 2,\\
       |kT_k(t)e^{k(x-12)}|\leq e^{-k},\ \ \ \ \ \ \ \ \ \ \ \ \ \ \ \ \ \ \ \  \ \ \ \ \ \ \ \ \ \ \ \ \ \ \ \ \ \ \ \ \ k\geq 2.
    \end{array}
    \right.
\end{equation*}
    Thus the series \eqref{IBj} is a solution of \eqref{1IB}.

    \begin{ex}
  \begin{equation}\label{qq0}
  \left\{
    \begin{array}{l}
  u_{t}+(x+1)^2 u_{xx}+u_{x}u=0,\ \ \ \ x\geq 1,\ t\geq 0,\\
  u(x,0)=(x+1)^{-1}+(x+1)^{-2}.
    \end{array}
  \right.
  \end{equation}\
   \end{ex}

Similar as Example \ref{exn1}, we let
\begin{equation}\label{qq2}
  u(x,t)=\sum\limits_{k\in \mathbb{N}_+}T_{k}(t)(x+1)^{-k},
\end{equation}
Suppose that the following conditions hold:
\begin{numcases}{}
   u=\sum\limits_{k\in \mathbb{N}_+}T_{k}(t)(x+1)^{-k}\in C([1,+\infty)\oplus [0,+\infty)), \label{1qq}\\
   u_x=\sum\limits_{k\in \mathbb{N}_+}-kT_{k}(t)(x+1)^{-k-1}\in C([1,+\infty)\oplus [0,+\infty)), \label{2qq}\\
   u_t=\sum\limits_{k\in \mathbb{N}_+}T'_{k}(t)(x+1)^{-k}\in C([1,+\infty)\oplus [0,+\infty)), \label{3qq}\\
   u_{xx}=\sum\limits_{k\in \mathbb{N}_+}k(k+1)T_{k}(t)(x+1)^{-k-2}\in C([1,+\infty)\oplus [0,+\infty)), \label{4qq}\\
   u_{x}u=\sum\limits_{k\geq 3}\ \sum\limits_{r=1}^{k-2}-r T_{r}(t)T_{k-1-r}(t)(x+1)^{-k}\in C([1,+\infty)\oplus [0,+\infty)).\label{5qq}
\end{numcases}
Substituting \eqref{qq2} into \eqref{qq0}, we get
\begin{equation*}
\left\{
\begin{array}{r}
 (T'_{1}+2T_1)(x+1)^{-1}+(T'_{2}+6T_2)(x+1)^{-2}+\sum\limits_{k\geq 3}(T'_{k}+k(k+1)T_k\\
\ \ \ \ \ \ \ -\sum\limits_{r=1}^{k-2}rT_{r}T_{k-1-r})(x+1)^{-k}=0,\\
  u(x,0)=\sum\limits_{k\in \mathbb{N}_+}T_{k}(0)(x+1)^{-k}=(x+1)^{-1}+(x+1)^{-2}.\ \ \ \ \ \ \ \ \ \
\end{array}
\right.
\end{equation*}
Note that the sequence $\{(x+1)^{-k}\}_{k\in \mathbb{N}_+}$ is linearly independent, so we have
\begin{equation*}
\left\{
\begin{array}{r}
    T'_{1}+2T_1=0,\ \ \ \ \ \ \ \ \ \ \ \ \ \ \ \ \ \ \ \ \ \ \ T_1(0)=1,\ \ \ \ \ \ \  \ \ \ \ \\
    T'_{2}+6T_2=0,\ \ \ \ \ \ \ \ \ \ \ \ \ \ \ \ \ \ \ \ \ \ \ T_2(0)=1,\ \ \ \ \ \ \  \ \ \ \ \\
   T'_{k}+k(k+1)T_k-\sum\limits_{r=1}^{k-2}rT_{r}T_{k-r}=0,\ \ \ \ \ \ \ \ T_k(0)=0,\ \ k\geq 3.
\end{array}
  \right.
\end{equation*}
Then we get
\begin{equation*}
 T_{k}(t)=\left\{
\begin{array}{c}
    e^{-2t},\ \ \ \ \ \ \ \ \ \ \ \ \ \ \ \ \  k=1,\\
    e^{-6t},\ \ \ \ \ \ \ \ \ \ \ \ \ \ \ \ \  k=2,\\
e^{-k(k+1)t}\int\limits_0^t \sum\limits_{r=1}^{k-2}r T_{r}(s)T_{k-1-r}(s)e^{k(k+1)s}\text{d}s ,\ \ \ \ k\geq 3.
\end{array}
  \right.
\end{equation*}
  Next we prove that the formal solution \eqref{qq2} satisfies \eqref{1qq}-\eqref{5qq}.
 By the induction method, we can prove that
  \begin{equation}\label{ttf}
    0<T_k(t)\leq e^{-(k+1)t},\ \ \ \ k\in \mathbb{N}_+.
 \end{equation}
  So the series $\sum_{k\in \mathbb{N}_+}T_{k}(t)(x+1)^{-k}$ converges uniformly on $[1,+\infty)\oplus [0,+\infty)$.
    It means that the formal solution \eqref{qq2} satisfies  \eqref{1qq}. Moreover, we can prove that
    \begin{equation*}
    \left\{
    \begin{array}{r}
     |-kT_{k}(t)(x+1)^{-k-1}|\leq ke^{-(k+1)t}(x+1)^{-k-1}\leq 2^{-k-1}k,\ \ \ \ \ \ \ \ \ \ \ \ \ \ \ \ \ k\geq 3,\\
      |T'_{k}(t)(x+1)^{-k}|=|k(k+1)T_k-\sum\limits_{r=1}^{k-2}rT_{r}T_{k-r}|(x+1)^{-k}\leq 2k(k+1)2^{-k},\ \ \ \ \ \ k\geq 3,\\
       |k(k+1)T_k(t)(x+1)^{-k-2}|\leq k(k+1)2^{-k-2},\ \  \ \ \ \ \ \ \ \ \ \ \ \ \ \ \ \ \ \ \ \ \ \ \ \ \ \ \  \ \ \ \ \ \ \ \ k\geq 3,\\
     |\sum\limits_{r=1}^{k-2}-r T_{r}(t)T_{k-1-r}(t)(x+1)^{-k}|\leq (k-2)(k-1) 2^{-k}, \ \ \ \ \ \ \ \ \ \ \ \ \ \ \ \ \ \ k\geq 3.
    \end{array}
    \right.
\end{equation*}
     So the formal solution \eqref{qq2} satisfies  \eqref{2qq}-\eqref{5qq}.
    Thus it is a solution of \eqref{qq0}.



\section*{Acknowledgments}

The paper is supported by the Natural Science Foundation of China (no. 11371185) and the Natural Science Foundation of Inner Mongolia,
China (no. 2013ZD01).


\end{document}